\newtheorem{theorem}{Theorem}
\newtheorem{definition}[theorem]{Definition}
\newtheorem{lemma}[theorem]{Lemma}
\newtheorem{assumption}[theorem]{Assumption}
\newtheorem{proposition}[theorem]{Proposition}
\newtheorem{remark}[theorem]{Remark}
\def\field#1{\mathbb #1}%
\def\R{\field{R}}%
\def\N{\field{N}}%
\renewcommand{\emptyset}{{\varnothing}}
\title{\LARGE \bf
Verification of Approximate Opacity via Barrier Certificates
}
\author{Siyuan Liu and Majid Zamani
\thanks{This work was supported in part by the H2020 ERC Starting Grant AutoCPS (grant agreement No. 804639), China Scholarship Council, and the NSF under Grant ECCS-2015403.}
\thanks{S. Liu is with the Department of Electrical and Computer Engineering, Technical University of Munich, Germany; email: {\tt\small sy.liu@tum.de}.}
\thanks{M. Zamani is with the Computer Science Department, University of Colorado Boulder, CO 80309, USA. M. Zamani is also with the Computer Science Department, LMU Munich, Germany; email: {\tt\small majid.zamani@colorado.edu}.   
}
}
\begin{document}

\pagestyle{empty}

\maketitle
\thispagestyle{empty}

\begin{abstract}
This paper is motivated by the increasing security concerns of cyber-physical systems. Here, we develop a discretization-free verification scheme targeting an \emph{information-flow} security property, called approximate initial-state opacity, for the class of discrete-time control systems. We propose notions of so-called \emph{augmented control barrier certificates} in conjunction with specified regions of interest capturing the initial and secret sets of the system. Sufficient conditions for (the lack of) approximate initial-state opacity of discrete-time control systems are proposed based on the existence of the proposed barrier certificates. We further present an efficient computation method by casting the conditions for barrier certificates as sum-of-squares programming problems. The effectiveness of the proposed results is illustrated through two numerical examples.  
\end{abstract}

\vspace{-0.3cm}

\section{INTRODUCTION}
	In recent decades, the world has witnessed a rapid increase in the development and deployment of cyber-physical systems (CPSs). Complex CPSs are becoming ubiquitous in safety-critical infrastructure,  ranging from power grids, medical devices to transportation networks. However, the tight interaction between embedded control software and physical processes may release secret information and expose the system to malicious intruders \cite{ashibani2017cyber}. As a result, the security and privacy of modern CPSs have received considerable attentions in the past few years. In this paper, we focus on an \emph{information-flow} security property, called \emph{opacity}, which captures whether a system has the ability to conceal its secret information from outside intruders. 

Opacity has been widely investigated in the domain of discrete event systems (DESs), see \cite{lafortune2018history} and the references therein. Various notions of opacity were proposed depending on how the ``secrets" are defined, e.g., language-based opacity \cite{lin2011opacity}, initial-state opacity \cite{saboori2013verification} and current-state opacity \cite{saboori2007notions}. 
Despite the rich history of opacity in DESs community, most of the works can only apply to systems modeled by finite state automata with discrete state sets or bounded Petri nets \cite{tong2016verification}.
Opacity has been studied on continuous state-space CPSs only very recently \cite{ramasubramanian2019notions, yin2019approximate, liu2020notion}. The results in \cite{ramasubramanian2019notions} introduced a framework for analyzing certain types of initial-state opacity for the class of discrete-time linear systems, wherein the notion of opacity is formulated as an output reachability property. The opacity verification procedure is established by approximating the set of reachable states. 
In \cite{yin2019approximate}, a new notion of \emph{approximate opacity} was proposed that is more applicable to metric systems with continuous state domain. The (possibly) imperfect measurement precision of intruders is characterized by a parameter $\delta$, which on the other hand indicates the security-guarantee level of the system. 
The work was later extended in  \cite{liu2020notion} to the class of discrete-time stochastic systems based on a notion of opacity-preserving simulation functions and their finite abstractions (finite Markov decision processes).    

In this paper, we aim at verifying approximate initial-state opacity property of discrete-time control systems. In this context, the intruder is assumed to be an outside observer that has full knowledge of the system dynamics. The intruder intends to infer if the initial state of the system is one of the secret ones by observing the output trajectories of the system. Approximate initial-state opacity requires that the intruder with measurement precision $\delta$ can never determine that the initial state of the system was a secret one \cite{yin2019approximate}. Verification of initial-state opacity has been widely studied in several literature. Results in DESs literature mostly translate this issue to an initial-state estimation problem and address it by the construction of initial-state estimators \cite{saboori2013verification}. However, this methodology is limited to discrete state-space systems. 
Note that in \cite{yin2019approximate, liu2020notion}, a finite-abstraction based technique was presented for verifying approximate opacity of control systems,
where the original control systems are approximated by discrete ones. Unfortunately, this methodology generally suffers from scalability issues since it requires discretization of the state and input sets of the original system. 

\textls[-2]{Motivated by this limitation, we develop a discretization-free approach for the formal verification of approximate initial-state opacity based on notions of barrier certificates. Barrier certificates have shown to be a promising tool for the analysis of safety problems \cite{prajna2007framework, ames2016control,jagtap2019formal}. A recent attempt to analyze privacy of CPSs using barrier certificates is made in \cite{ahmadi2018privacy}.
A new notion of current-state opacity was considered there 
based on the belief space of the intruder.
The privacy verification problem is cast into checking a safety property of the intruder's belief dynamics using barrier certificates. However, this framework is again limited to systems modeled by partially-observable Markov decision processes (POMDPs) with finite state sets.}

In this work, we first introduce two types of so-called \emph{augmented control barrier certificates} (ACBCs), which are defined for an augmented system constructed by augmenting a control system with itself.
The first type of ACBC guarantees a \emph{safety} property of the augmented system in the sense that there is no trajectory originating from a given initial region reaching a given unsafe set. Along with this, the initial and unsafe regions are designed in a specific form capturing the secret and initial set of the original system. In this way, the existence of an ACBC provides us a sufficient condition ensuring that the original system is approximate initial-state opaque. 
In general, the failure in finding such an ACBC does not mean the system is not opaque. Therefore, we further present another type of ACBC which proves a \emph{reachability} property of the augmented system. This type of ACBC can be utilized for showing that the original system starting from the initial set will eventually reach the unsafe region. This type of ACBC, on the other hand, provides a sufficient condition showing that the original system is lacking approximate initial-state opacity. Additionally, we present a way to compute polynomial ACBCs by means of sum-of-squares (SOS) programming, where the conditions required for the ACBCs are reformulated as SOS constraints.

\textls[-1]{The remainder of the paper is organized as follows. In Section \ref{Sec:II}, we introduce discrete-time control systems and the notion of approximate initial-state opacity.  In Section \ref{SecIII}, we present two types of augmented control barrier certificates and their role in verifying approximate initial-state opacity.
Section \ref{SecIv} presents a methodology for the construction of the augmented control barrier certificates using SOS programming. Section \ref{Sec:ex} illustrates the theoretical results through two examples, and conclusion remarks are given in Section \ref{Sec:conclusion}.} 

\vspace{-0.35cm}

\section{Preliminaries} \label{Sec:II} 
\emph{Notations:} \textls[-10]{We denote by $\R$ and $\N$ the set of real numbers and non-negative integers,  respectively. These symbols are annotated with subscripts to restrict them in
the obvious way, e.g. $\R_{>0}$ denotes the positive real numbers. 
We denote the closed intervals in $\R$ by $[a,b]$. For $a,\!b\!\in\!\N$ and $a\!\le\! b$, we use $[a;b]$ to denote the corresponding intervals in $\N$. 
Given $N \!\!\in \!\!\mathbb N_{\ge 1}$ vectors $x_i \!\in\! \mathbb R^{n_i}$, with $i\!\in\! [1;N]$, $n_i\!\in\! \mathbb N_{\ge 1}$, and $n \!= \!\sum_i n_i$, we denote the concatenated vector in $\mathbb R^{n}$ by $x \!=\! [x_1;\!\ldots\!;x_N]$ and the Euclidean norm of $x$ by $\Vert x\Vert$.
The individual elements in a matrix $A\!\in\! \R^{m\times n}$, are denoted by $\{A\}_{i,j}$, where $i\!\in\![1;m]$ and $j\!\in\![1;n]$. 
We denote by $\emptyset$ the empty set. 
Given sets $X$ and $Y$ with $X\!\subset\! Y$, the complement of $X$ with respect to $Y$ is defined as $Y \!\backslash X \!=\! \{x \!\in\! Y \!\mid\! x \!\notin\! X\}.$
The Cartesian product of two sets $X$ and $Y$ is defined by $X \!\times\! Y\!\!=\!\!\{(x,y) \!\mid\! x \!\in\! X, y \!\in\! Y \}$. 
Given a function $f\!:\! X \!\rightarrow \!Y$ and a function $g\!:\! A \!\rightarrow\! B$, we define $f \!\times\! g \!:\! X \!\times\! A \!\rightarrow\! Y \!\times\! B$. 
For any set $Z\! \subseteq \!\R^{n}$, $\partial Z$ and $\overline Z$, respectively, denotes the boundary and topological closure of $Z$.}
\vspace{-0.5cm}

\subsection{Approximate Initial-State Opacity for Discrete-Time Control Systems}

In this subsection, we introduce the notion of approximate initial-state opacity for 
the class of discrete-time control systems defined below.
\begin{definition}\label{def:sys1}
	A discrete-time control system (dt-CS) $\Sigma$ is defined by the tuple	$\Sigma=(\mathbb X,\mathbb X_0,\mathbb X_s,\mathbb U,f,\mathbb Y,h)$
	where $\mathbb X$, $\mathbb U$, and $\mathbb Y$ are the state set, input set, and output set, respectively. We denote by $\mathbb X_0 \subseteq \mathbb X$ and $\mathbb X_s \subseteq \mathbb X$ the sets of initial states and secret states, respectively. The function $f: \mathbb X\times \mathbb U \rightarrow \mathbb X$ is the state transition function, and $h:\mathbb X \rightarrow \mathbb Y$  is the output function.
	The dt-CS $\Sigma $ is described by difference equations: 
	\begin{align}\label{eq:2}
	\Sigma:\left\{
	\begin{array}{rl}
	\mathbf{x}(t+1)=& f(\mathbf{x}(t),\nu(t)),\\
	\mathbf{y}(t)=&h(\mathbf{x}(t)),
	\end{array}
	\right.
	\end{align}
where $\mathbf{x}:\mathbb{N}\rightarrow \mathbb X $, $\nu: \mathbb{N}\rightarrow \mathbb U$ and $\mathbf{y}:\mathbb{N}\rightarrow \mathbb Y$ are the state, input and output signals, respectively. We use $\mathbf{x}_{x_0,\nu}$ to denote a state run of $\Sigma$ starting from initial state $x_0$ under input run $\nu: \mathbb{N}\rightarrow \mathbb U$.
\end{definition}
	
Throughout this paper, we focus on analyzing approximate initial-state opacity for the class of control systems as in Definition \ref{def:sys1}.
Let us recall the notion of approximate initial-state opacity, originally introduced in \cite{yin2019approximate}, defined below.
\begin{definition}\label{def:opa}
\textls[-1]{Consider a control system $	\Sigma\!=\!(\mathbb X,\mathbb X_0,\mathbb X_s, $ $\mathbb U,f,\mathbb Y,h)$ and a constant $\delta \in \mathbb{R}_{\geq 0}$. System $\Sigma$ is said to be  $\delta$-approximate initial-state opaque if for any $x_0 \!\in\! \mathbb X_0 \!\cap \!\mathbb X_s$ and any finite state run $\mathbf{x}_{x_0,\nu}\!=\!\{x_0,\dots, x_n\}$, there exists $\hat x_0 \!\in\! \mathbb X_0 \!\setminus\! \mathbb X_s$ and a finite state run $\mathbf{x}_{\hat x_0,\hat \nu}\!=\!\{\hat x_0,\dots, \hat x_n\}$ such that} \vspace{-0.25cm}
 \begin{align} \notag
 \max_{i \in [0;n]} \Vert h(x_i)-h(\hat x_i)\Vert \leq \delta.
 \end{align}
\end{definition}
Intuitively, the system $\Sigma$ is $\delta$-approximate initial-state opaque if for every state run initiated from a secret state, there exists another state run originated from a non-secret state with similar output trajectories (captured by $\delta$). 
Hence, the intruder is never certain that the system is initiated from a secret state no matter which output run is generated. 
Hereafter, we assume without loss of generality that $\forall x_0 \!\in\! \mathbb X_0 \!\cap\! \mathbb X_s$,   \vspace{-0.15cm}
\begin{align} \label{initassum}
\{x \in \mathbb X_0 \mid \Vert h(x) - h(x_0)\Vert \leq \delta \} \nsubseteq \mathbb X_s.
\end{align} 
This assumption requires that the secret of the system is not revealed initially; otherwise approximate initial-state opacity is trivially violated.
\begin{remark}
We remark that our notion of initial-state opacity is different from that of \emph{observability}. 
		An observability notion states that every initial state can be determined by observing a finite output sequence under a given input run \cite{hermann1977nonlinear}. However, in our context, initial-state opacity is defined as the plausible deniability of a system for every secret initial information under any input sequence.
	In DESs literature, it was shown that observability can be reformulated as language-based opacity by properly specifying the languages and the observation mapping \cite{lin2011opacity}. However, the relationship between opacity and observability is more challenging in the domain of CPSs and is left to future investigation.
\end{remark}
\vspace{-0.2cm}
\section{Verifying Approximate Initial-State Opacity} \label{SecIII}
Although the verification of opacity has been widely investigated for finite systems with discrete states (e.g., in the domain of DES), there is no systematic way in the literature to check opacity of systems with continuous state spaces. 
In the sequel, we propose a technique that is sound in verifying approximate initial-state opacity for discrete-time control systems.  
Our approach is based on finding a certain type of barrier certificates as defined in the next subsection. 
\vspace{-0.2cm}
\subsection{Verifying Approximate Initial-State Opacity via Barrier Certificates}
Consider a dt-CS $\Sigma\!=\!(\mathbb X,\mathbb X_0,\mathbb X_s,\mathbb U,f,\mathbb Y,h)$ as in Definition \ref{def:sys1}.  
We define the associated augmented system by 
\begin{align} \notag
\Sigma \!\times\! \Sigma\!=\!(\mathbb X \!\times\! \mathbb X,\mathbb X_0 \!\times\! \mathbb X_0,\mathbb X_s \!\times\! \mathbb X_s,\mathbb U \!\times\! \mathbb U,f \!\times\!f,\mathbb Y \!\times\! \mathbb Y, h \!\times\! h),
\end{align}
which can be seen as the product of a dt-CS $\Sigma$ and itself. 
For later use, we denote by $(x,\hat x) \!\in\! \mathbb X \!\times\! \mathbb X$ a pair of states in $\Sigma \!\times \!\Sigma$ and by $(\mathbf{x}_{x_0,\nu},  \mathbf{x}_{\hat x_0,\hat \nu})$ the state trajectory of $\Sigma \times \Sigma$ starting from $(x_0, \hat x_0)$ under input run ($\nu$, $\hat \nu$). We use  $\mathcal{R}\!=\!\mathbb X \!\times \mathbb X$ to denote the augmented state space.

Now, we define a notion of barrier certificates that is constructed over the augmented system $\Sigma \!\times \!\Sigma$ and ensures a safety property for $\Sigma \!\times\! \Sigma$.
\begin{proposition} \label{BC}
	Consider a dt-CS $\Sigma$ as in Definition \ref{def:sys1}, the associated augmented system $\Sigma \!\times\! \Sigma$, and sets $\mathcal{R}_0, \mathcal{R}_{u} \!\subseteq\! \mathcal{R}$. 
	Suppose there exists a function $\mathcal{B}:\mathbb X \times \mathbb X \!\rightarrow\! \mathbb{R}$ and constants $\underline{\epsilon},\overline{\epsilon} \in \mathbb{R}$ with $\overline{\epsilon} > \underline{\epsilon}$ such that \vspace{-0.2cm}
	\begin{align} \label{first}
	&\forall (x,\hat x) \in \mathcal{R}_0, \quad \quad \quad \quad \quad \quad  \quad \quad   \quad \quad \mathcal{B}(x,\hat x) \leq \underline{\epsilon}, \\ \label{second}
	&\forall (x,\hat x) \in \mathcal{R}_u, \quad \quad \quad \quad \quad \quad  \quad \quad  \quad \quad  \mathcal{B}(x,\hat x) \geq \overline{\epsilon}, \\  \vspace{-0.2cm} \notag
	&\forall (x,\hat x) \in \mathcal{R}, \forall u \in \mathbb{U}, \exists \hat u \in \mathbb{U}, \\ \label{third} \vspace{-0.2cm}
	& \quad \quad \quad \quad \quad \quad \quad \mathcal{B}(f(x,u),f(\hat x,\hat u)) - \mathcal{B}(x,\hat x)\leq 0. 
	\end{align}\\[-14pt]
Then, for any initial condition  $(x_0, \hat x_0) \!\in\! \mathcal{R}_0$ and for any input run $\nu$, there exists an input run $\hat \nu$ such that
$(\mathbf{x}_{x_0,\nu}(t), \mathbf{x}_{\hat x_0,\hat \nu}(t)) \cap \mathcal{R}_u = \emptyset$, $\forall t \in \mathbb{N}$.

\end{proposition}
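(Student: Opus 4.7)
The plan is to construct the companion input run $\hat\nu$ inductively in lockstep with $\nu$, leveraging condition \eqref{third} as a one-step selection rule so that the barrier function $\mathcal{B}$ is non-increasing along the augmented trajectory. The conclusion then follows by comparing the level sets of $\mathcal{B}$ on $\mathcal{R}_0$ and $\mathcal{R}_u$ guaranteed by \eqref{first} and \eqref{second}.

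Concretely, I would fix an arbitrary $(x_0,\hat x_0) \in \mathcal{R}_0$ and an arbitrary input run $\nu : \mathbb{N} \to \mathbb{U}$, and write $x(t) := \mathbf{x}_{x_0,\nu}(t)$ and $\hat x(t) := \mathbf{x}_{\hat x_0,\hat\nu}(t)$. I define $\hat\nu$ recursively: assuming $x(t)$ and $\hat x(t)$ have already been defined (so that in particular $(x(t),\hat x(t)) \in \mathcal{R}$), invoke condition \eqref{third} with the pair $(x,\hat x) = (x(t),\hat x(t))$ and the input $u = \nu(t)$ to obtain some $\hat u \in \mathbb{U}$ satisfying $\mathcal{B}(f(x(t),\nu(t)),f(\hat x(t),\hat u)) \le \mathcal{B}(x(t),\hat x(t))$, and set $\hat\nu(t) := \hat u$. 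This is well posed at every step, so $\hat\nu$ is defined on all of $\mathbb{N}$.

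Next, I would prove by induction on $t$ that $\mathcal{B}(x(t),\hat x(t)) \le \underline{\epsilon}$ for every $t \in \mathbb{N}$. The base case $t=0$ is exactly \eqref{first}. For the inductive step, the construction of $\hat\nu(t)$ gives $\mathcal{B}(x(t+1),\hat x(t+1)) \le \mathcal{B}(x(t),\hat x(t)) \le \underline{\epsilon}$. Finally, suppose for contradiction that $(x(t^\star),\hat x(t^\star)) \in \mathcal{R}_u$ for some $t^\star \in \mathbb{N}$; then \eqref{second} forces $\mathcal{B}(x(t^\star),\hat x(t^\star)) \ge \overline{\epsilon}$, which together with the inductive bound yields $\overline{\epsilon} \le \underline{\epsilon}$, contradicting the assumption $\overline{\epsilon} > \underline{\epsilon}$. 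Hence the augmented trajectory avoids $\mathcal{R}_u$ for all $t \in \mathbb{N}$.

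There is no deep obstacle here: the argument is a standard discrete-time barrier-certificate comparison lemma. The only mildly delicate point is that the selection of $\hat\nu(t)$ relies on a pointwise existence statement in \eqref{third}, so strictly speaking a (countable) choice is invoked to assemble the full input signal $\hat\nu$; for discrete-time systems this is routine and requires no further regularity on $\mathcal{B}$, $f$, or $\mathbb{U}$.
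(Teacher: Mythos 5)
Your proof is correct and follows essentially the same route as the paper's: construct $\hat\nu$ stepwise from the existential quantifier in \eqref{third}, deduce that $\mathcal{B}$ is non-increasing along the augmented trajectory, and derive the contradiction $\overline{\epsilon}\le\underline{\epsilon}$ if the trajectory were to enter $\mathcal{R}_u$. Your version is in fact slightly more careful than the paper's, since you make the inductive definition of $\hat\nu$ and the bound $\mathcal{B}(x(t),\hat x(t))\le\underline{\epsilon}$ explicit rather than assuming $\hat\nu$ is given.
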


\begin{proof}
	This proposition is proved by contradiction. Consider a state trajectory $(\mathbf{x}_{x_0,\nu},\! \mathbf{x}_{\hat x_0,\hat \nu})$ of $\Sigma \!\times\! \Sigma$ that starts from an initial condition $(x_0, \hat x_0) \!\in\! \mathcal{R}_0$, under input sequences $\nu$ and $\hat \nu$. Suppose $\hat \nu$ is computed such that the inequality in \eqref{third} holds. Assume the state run reaches a state in $\mathcal{R}_u$, i.e., $(\mathbf{x}_{x_0,\nu}(t), \mathbf{x}_{\hat x_0,\hat \nu}(t)) \!\in \! \mathcal{R}_u$ for some $t \in \mathbb{N}$. From \eqref{first} and \eqref{second}, we have  $\mathcal{B}(x_0,\hat x_0)  \!\leq \! \underline{\epsilon}$ and $\mathcal{B}(\mathbf{x}_{x_0,\nu}(t), \mathbf{x}_{\hat x_0,\hat \nu}(t)) \! \geq \! \overline{\epsilon}$. By using \eqref{third}, one has $\overline{\epsilon}  \! \leq  \! \mathcal{B}(\mathbf{x}_{x_0,\nu}(t), \mathbf{x}_{\hat x_0,\hat \nu}(t))  \!\leq \! \mathcal{B}(x_0,\hat x_0)  \!\leq  \!\underline{\epsilon}$, which contradicts $\overline{\epsilon}  \!> \! \underline{\epsilon}$.
	Therefore, for any state trajectory of $\Sigma \!\times\! \Sigma$ starting from any initial condition in $\mathcal{R}_0$ under any input run $\nu$, $(\mathbf{x}_{x_0,\nu}(t), \mathbf{x}_{\hat x_0,\hat \nu}(t)) \cap \mathcal{R}_u\! =\! \emptyset$ always holds under the extracted control policy $\hat \nu$, which completes the proof. 
\end{proof}

If $\mathcal{B}(x,\hat x)$ satisfies the conditions in Proposition \ref{BC}, then it is called an \emph{augmented control barrier certificate} (ACBC) for $\Sigma \!\times \! \Sigma$.  
Next, we show how one can leverage the ACBC to verify approximate initial-state opacity for a dt-CS $\Sigma$. To this purpose, we define the sets of initial conditions $\mathcal{R}_0$ and unsafe states $\mathcal{R}_u$ as: \vspace{-0.1cm}
\begin{align} \label{set:initial}
\!\!\!\!\!\mathcal{R}_0\!=&\{(x,\hat x) \!\in \!(\mathbb X_0 \!\cap\! \mathbb X_s) \!\times\! (\mathbb X_0 \!\setminus \!\mathbb X_s) \!\mid \Vert h(x)\!-\!h(\hat x)\Vert \!\leq\! \delta\},\!\!\! \\ \label{set:unsafe}
\!\!\!\!\!\mathcal{R}_u\!=&\{(x,\hat x)\!\in \!\mathbb X \times \mathbb X \mid \Vert h(x)-h(\hat x)\Vert > \delta \},
\end{align}
where $\delta \in \mathbb{R}_{\geq 0}$ captures the measurement precision of the intruder as introduced in Definition \ref{def:opa}. The following theorem provides us a sufficient condition in verifying approximate initial-state opacity of discrete-time control systems.
\begin{theorem}
	Consider a dt-CS $\Sigma$ as in Definition \ref{def:sys1}. Suppose there exists a function $\mathcal{B}:\mathbb X \times \mathbb X \rightarrow \mathbb{R}$ satisfying \eqref{first}-\eqref{third} in Proposition \ref{BC} with sets $\mathcal{R}_0, \mathcal{R}_u$ given in \eqref{set:initial}-\eqref{set:unsafe}. Then, system $\Sigma$ is $\delta$-approximate initial-state opaque. 
\end{theorem}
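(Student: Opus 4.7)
The plan is to unpack Definition \ref{def:opa} and show that for every secret initial state and every input run, the required non-secret companion trajectory can be produced directly by Proposition \ref{BC}. First, I would fix an arbitrary $x_0 \in \mathbb{X}_0 \cap \mathbb{X}_s$ together with an arbitrary finite state run $\mathbf{x}_{x_0,\nu} = \{x_0,\ldots,x_n\}$, so the task reduces to exhibiting a single $\hat{x}_0 \in \mathbb{X}_0 \setminus \mathbb{X}_s$ and input $\hat{\nu}$ whose trajectory stays within output distance $\delta$ of $\mathbf{x}_{x_0,\nu}$ at every step in $[0;n]$.

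The next step is to use assumption \eqref{initassum} to pick a valid companion initial state. Indeed, \eqref{initassum} guarantees that the set $\{x \in \mathbb{X}_0 \mid \|h(x) - h(x_0)\| \leq \delta\}$ is not contained in $\mathbb{X}_s$, so I can choose $\hat{x}_0 \in \mathbb{X}_0 \setminus \mathbb{X}_s$ with $\|h(x_0) - h(\hat{x}_0)\| \leq \delta$. By the definition \eqref{set:initial} of $\mathcal{R}_0$, this means $(x_0,\hat{x}_0) \in \mathcal{R}_0$, which is precisely the hypothesis needed to invoke Proposition \ref{BC}.

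Applying Proposition \ref{BC} to the pair $(x_0,\hat{x}_0)$ and the given input $\nu$, I obtain an input run $\hat{\nu}$ such that the joint trajectory $(\mathbf{x}_{x_0,\nu}(t),\mathbf{x}_{\hat{x}_0,\hat{\nu}}(t))$ avoids $\mathcal{R}_u$ for all $t \in \mathbb{N}$. By the definition \eqref{set:unsafe} of $\mathcal{R}_u$, avoiding it at time $t$ is equivalent to $\|h(\mathbf{x}_{x_0,\nu}(t)) - h(\mathbf{x}_{\hat{x}_0,\hat{\nu}}(t))\| \leq \delta$. Restricting to $t \in [0;n]$ and taking the maximum yields exactly the inequality required by Definition \ref{def:opa}.

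The only subtlety I anticipate is bookkeeping the ``for all $\nu$, there exists $\hat{\nu}$'' quantification: Proposition \ref{BC} produces $\hat{\nu}$ pointwise in time via the existential quantifier in \eqref{third}, and I should make it explicit that the concatenation of these per-step choices defines a well-posed input signal $\hat{\nu}:\mathbb{N} \to \mathbb{U}$ (in particular, it may depend causally on $\nu$, but Definition \ref{def:opa} allows this). Beyond that small remark, the argument is essentially a direct translation of Proposition \ref{BC} through the choices \eqref{set:initial}--\eqref{set:unsafe}, with \eqref{initassum} supplying the existence of a valid $\hat{x}_0$.
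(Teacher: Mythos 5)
Your proposal is correct and follows essentially the same route as the paper's own proof: invoke \eqref{initassum} to obtain a non-secret $\hat{x}_0$ with $(x_0,\hat{x}_0)\in\mathcal{R}_0$, apply Proposition \ref{BC} to get $\hat{\nu}$ whose joint trajectory avoids $\mathcal{R}_u$, and translate that into the output-distance bound of Definition \ref{def:opa}. Your closing remark on the step-by-step (causal) construction of $\hat{\nu}$ from the existential quantifier in \eqref{third} is a point the paper leaves implicit, and is worth keeping.
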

\begin{proof}
Consider an arbitrary secret initial state $x_0 \!\in\! \mathbb X_0 \cap \mathbb X_s$, any input run $\nu$, and the corresponding state run $\mathbf{x}_{x_0,\nu}$ in $\Sigma$. 
First note that by \eqref{initassum}, $\{x \!\in\! \mathbb X_0 \mid \Vert h(x) \!-\! h(x_0)\Vert \!\leq\! \delta \} \!\nsubseteq\! \mathbb X_s$. It follows that there exists an initial state $\hat x_0 \!\in\! \mathbb X_0 \!\setminus\! \mathbb X_s$ such that $\Vert h(\hat x_0) \!-\! h(x_0) \Vert \!\leq\! \delta$.
Consider the pair of initial states $(x_0,\hat x_0)$. It can be readily seen that $(x_0,\hat x_0) \in \mathcal{R}_0$ as in \eqref{set:initial}. 
Now, given the existence of an ACBC as in Proposition \ref{BC}, there exists a control policy $\hat \nu$ such that \eqref{third} is satisfied. By using Proposition \ref{BC}, under $\hat \nu$, we have the guarantee that any state run of $\Sigma \times \Sigma$ starting from $\mathcal{R}_0$ never reaches the unsafe region $\mathcal{R}_u$, i.e. $(\mathbf{x}_{x_0,\nu}(t), \mathbf{x}_{\hat x_0,\hat \nu}(t)) \!\cap\! \mathcal{R}_u \!=\! \emptyset$, $\forall t \!\in\! \mathbb{N}$.  This simply implies the satisfaction of $\Vert h(\mathbf{x}_{x_0,\nu}(t))\!-\!h(\mathbf{x}_{\hat x_0,\hat \nu}(t))\Vert \!\leq\! \delta$, $\forall t \!\in\! \mathbb{N}$.
Since $x_0 \!\in\! \mathbb X_0 \!\cap\! \mathbb X_s$ and $\mathbf{x}_{x_0,\nu}$ are arbitrarily chosen, 
we conclude that $\Sigma$ is $\delta$-approximate initial-state opaque.
\end{proof}
\vspace{-0.25cm}
\subsection{Verifying Lack of Approximate Initial-State Opacity via Barrier Certificates}
We presented in the previous subsection a sufficient condition for verifying approximate initial-state opacity. However, failing to find such an ACBC does not necessarily imply that the system is not opaque. Motivated by this, in this subsection, we aim at presenting a sufficient condition to verify the lack of approximate initial-state opacity of a dt-CS $\Sigma$. This method is based on constructing another type of ACBC ensuring a reachability property for $\Sigma \!\times\! \Sigma$. 
\begin{proposition} \label{BC1}
	Consider a dt-CS $\Sigma$ as in Definition \ref{def:sys1}, the associated augmented system $\Sigma \times \Sigma$, and sets $\mathcal{R}_0, \mathcal{R}_{u} \subseteq \mathcal{R}$. 
Suppose $\mathbb X \subset \mathbb{R}^n$ is a bounded set and there exists a continuous function $V:\mathbb X \times \mathbb X \rightarrow \mathbb{R}$ such that \vspace{-0.2cm}
	\begin{align} \label{1st}
	&\forall (x,\hat x) \in \mathcal{R}_0, \quad \quad \quad \quad \quad \quad  \quad \quad   \quad \quad V(x,\hat x) \leq 0, \\ \label{2nd}
	&\forall(x,\hat x) \in \partial \mathcal{R} \setminus \partial  \mathcal{R}_u, \quad \quad \quad \quad \quad \quad  \quad  V(x,\hat x) > 0, \\  \notag
	&\forall(x,\hat x) \in \overline{(\mathcal{R} \setminus \mathcal{R}_u)},  \exists u \in \mathbb{U}, \forall \hat u \in \mathbb{U},\\ \label{3rd}
	& \quad \quad \quad \quad \quad \quad \quad V(f(x,u),f(\hat x,\hat u))-V(x,\hat x) < 0.
	\end{align}\\[-14pt]
Then, for any initial condition $(x_0, \hat x_0) \in \mathcal{R}_0$, there exists an input run $\nu$ such that $(\mathbf{x}_{x_0,\nu}(T), \mathbf{x}_{\hat x_0,\hat \nu}(T)) \in \mathcal{R}_u$ for any $\hat \nu$, for some $T \geq 0$, and $(\mathbf{x}_{x_0,\nu}(t), \mathbf{x}_{\hat x_0,\hat \nu}(t)) \in \mathcal{R}$, $\forall t \in [0,T]$. 
\end{proposition}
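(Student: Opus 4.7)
The plan is to construct a state-feedback policy that forces $V$ strictly down along the augmented trajectory regardless of the adversary's input $\hat\nu$, then to promote the pointwise strict decrease to a uniform rate so that $V$ must become arbitrarily negative, contradicting its boundedness on a compact sublevel set. The second part of the conclusion --- that the trajectory remains in $\mathcal{R}$ on $[0,T]$ --- is automatic, since $f:\mathbb X\times\mathbb U\to\mathbb X$ keeps trajectories inside $\mathcal{R}=\mathbb X\times\mathbb X$, so only reachability of $\mathcal{R}_u$ in finite time needs to be argued.

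First I would fix $(x_0,\hat x_0)\in\mathcal{R}_0$. For each augmented state $(x,\hat x)\in\overline{\mathcal{R}\setminus\mathcal{R}_u}$, condition \eqref{3rd} supplies some $u^{*}(x,\hat x)\in\mathbb U$ such that $V(f(x,u^{*}(x,\hat x)),f(\hat x,\hat u))-V(x,\hat x)<0$ for every $\hat u\in\mathbb U$; I would take any such selection and, for an arbitrary adversary $\hat\nu$, define $\nu$ recursively by $\nu(t)\Let u^{*}(\mathbf{x}_{x_0,\nu}(t),\mathbf{x}_{\hat x_0,\hat\nu}(t))$ (as in the proof of Proposition \ref{BC}, this state-feedback is interpreted as the input sequence referenced in the statement). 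Then I would argue by contradiction: suppose the resulting trajectory stays in $\mathcal{R}\setminus\mathcal{R}_u$ for every $t\in\mathbb N$. By \eqref{1st} we have $V(x_0,\hat x_0)\le 0$ and, by the feedback, $V$ strictly decreases at each step, so the trajectory is confined to $K\Let\{V\le 0\}\cap\overline{\mathcal{R}\setminus\mathcal{R}_u}$, which is compact because $\mathbb X$ is bounded and $V$ is continuous. Condition \eqref{2nd} rules out a separate failure mode: $V>0$ on $\partial\mathcal{R}\setminus\partial\mathcal{R}_u$, so the monotone decrease of $V$ starting from $V(x_0,\hat x_0)\le 0$ forbids the trajectory from ever touching that outer boundary of the state space.

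The crux is promoting the pointwise strict inequality in \eqref{3rd} to a uniform decrease rate $\eta>0$. Setting $\psi(x,\hat x)\Let\sup_{\hat u\in\mathbb U}\bigl[V(f(x,u^{*}(x,\hat x)),f(\hat x,\hat u))-V(x,\hat x)\bigr]$, \eqref{3rd} gives $\psi<0$ on $K$; combining continuity of $f$ and $V$ with compactness of $K$ (and compactness of $\mathbb U$, or an upper-semicontinuity argument on the inf-sup), one extracts $\eta>0$ with $\psi\le-\eta$ on $K$. Iterating, $V(\mathbf{x}_{x_0,\nu}(t),\mathbf{x}_{\hat x_0,\hat\nu}(t))\le V(x_0,\hat x_0)-t\eta\to-\infty$, contradicting the boundedness of the continuous function $V$ on the compact set $K$. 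Hence the trajectory must exit $\mathcal{R}\setminus\mathcal{R}_u$ at some finite $T\ge 0$, and by the boundary argument above the exit can only be through $\mathcal{R}_u$. The hardest step is exactly this uniformization: without some regularity on $\mathbb U$ or on the selector $u^{*}$, $\psi$ could approach zero along a sequence in $K$ and the trajectory could linger indefinitely without ever entering $\mathcal{R}_u$, so the argument implicitly relies on such a hypothesis or on strengthening \eqref{3rd} to a uniform version.
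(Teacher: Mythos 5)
Your proof is correct and follows essentially the same route as the paper's: a Lyapunov-style strict-decrease argument for a feedback selection of $u$ on the compact set $\overline{(\mathcal{R}\setminus\mathcal{R}_u)}$, with condition \eqref{2nd} ruling out exit through $\partial\mathcal{R}\setminus\partial\mathcal{R}_u$, so the only possible exit is into $\mathcal{R}_u$. The one place you go beyond the paper is the uniformization step: the paper simply asserts that ``strictly decreasing and bounded below'' forces a finite-time exit, which in discrete time is exactly the gap you flag, so your extraction of a uniform rate $\eta>0$ via compactness of $K$ (and of $\mathbb U$, which holds in the paper's examples) is a genuine tightening of the argument rather than an optional refinement.
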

\begin{proof}
\textls[-12]{Consider an initial state $(x_0, \hat x_0)\!\!\in\!\! \mathcal{R}_0$. One has $V(x_0, \hat x_0) \!\!\leq\!\! 0$ by \eqref{1st}. 
Consider an input run $\nu$ such that \eqref{3rd} is satisfied for
the state runs $\mathbf{x}_{x_0,\nu}(t), \mathbf{x}_{\hat x_0,\hat \nu}(t)$ of $\Sigma$, where $\hat \nu$ is an arbitrary input run. 
First note that the continuous function $V(x,\hat x)$ is bounded below on the compact set $\overline{(\mathcal{R} \!\setminus\! \mathcal{R}_u)}$. 
From \eqref{3rd}, $V(x,\hat x)$ is strictly decreasing along the trajectory $(\mathbf{x}_{x_0,\nu},\! \mathbf{x}_{\hat x_0,\hat \nu})$ in region $\overline{(\mathcal{R} \!\setminus \!\mathcal{R}_u)}$.   
It follows that $(\mathbf{x}_{x_0,\nu},\! \mathbf{x}_{\hat x_0,\hat \nu})$ must leave $\overline{(\mathcal{R} \!\setminus\! \mathcal{R}_u)}$ in finite time.
Now, assume $(\mathbf{x}_{x_0,\nu}, \mathbf{x}_{\hat x_0,\hat \nu})$ leaves $\overline{(\mathcal{R} \!\setminus\! \mathcal{R}_u)}$ without entering region $\mathcal{R}_u$ first. Consider the first time instant $t\! =\! T$ when $(\mathbf{x}_{x_0,\nu}(t), \mathbf{x}_{\hat x_0,\hat \nu}(t))$ is leaving $\overline{(\mathcal{R} \!\setminus\! \mathcal{R}_u)}$, i.e. $(\mathbf{x}_{x_0,\nu}(t),$ $ \mathbf{x}_{\hat x_0,\hat \nu}(t)) \!\in \!\overline{(\mathcal{R} \!\setminus\! \mathcal{R}_u)}$ for all $t \!\in\! [0,T]$, and $(\mathbf{x}_{x_0,\nu}(T\!+\!\epsilon), \mathbf{x}_{\hat x_0,\hat \nu}(T\!+\!\epsilon))\! \notin \!\mathcal{R}$ for any $\epsilon \!\!>\!\!0$. By \eqref{3rd} and $V(x_0, \hat x_0) \!\leq\! 0$, we have $V(\mathbf{x}_{x_0,\nu}(T), \mathbf{x}_{\hat x_0,\hat \nu}(T)) \!\leq \!0$ which contradicts \eqref{2nd}. Therefore, we conclude that for any run starting from $\mathcal{R}_0$ under $\nu$, there must exist $T \!\!\geq\!\! 0$ such that $(\mathbf{x}_{x_0,\nu}(T), \mathbf{x}_{\hat x_0,\hat \nu}(T)) \!\in\! \mathcal{R}_u$ for any $\hat \nu$, and $(\mathbf{x}_{x_0,\nu}(t), \mathbf{x}_{\hat x_0,\hat \nu}(t)) \!\in\! \mathcal{R}$, $\forall t \!\in\! [0,T]$, which completes the proof.}
\end{proof}
\begin{remark}
We remark that the universal quantifier in \eqref{1st} is not necessary to show the lack of approximate initial-state opacity. 
One can relax the universal quantifier to an existential one by modifying the definition of barrier certificates, together with the corresponding initial and unsafe regions, at the cost of having a much more complex structure.  
However, it is difficult to formulate such a function and the corresponding set constraints to sum-of-squares programs (c.f. Section \ref{SecIv}), and thus, is out of the scope of this paper.

\end{remark}

\textls[-3]{A function $V(x,\hat x)$ satisfying the conditions in Proposition \ref{BC1} is also called an ACBC for $\Sigma \!\times\! \Sigma$. The idea of using barrier functions to prove reachability was first described in \cite{prajna2005primal}. 
Next, we show that the above-defined ACBC can be used for verifying the lack of approximate initial-state opacity of dt-CSs.}

\begin{theorem}
\textls[-1]{Consider a dt-CS $\Sigma$ as in Definition \ref{def:sys1}. Suppose there exists a continuous function $V\!:\!\mathbb X \!\times\! \mathbb X \!\rightarrow\! \mathbb{R}$ satisfying \eqref{1st}-\eqref{3rd} in Proposition \ref{BC1}  with sets $\mathcal{R}_0, \mathcal{R}_u$ given in \eqref{set:initial}-\eqref{set:unsafe}. Then, system $\Sigma$ is not $\delta$-approximate initial-state opaque. }	
\end{theorem}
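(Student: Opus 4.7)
The plan is to parallel the structure of the earlier opacity theorem, but derive the negation of $\delta$-approximate initial-state opacity from Proposition \ref{BC1} instead. Concretely, I would fix an arbitrary secret initial state $x_0 \in \mathbb X_0 \cap \mathbb X_s$ and aim to exhibit an input run $\nu$ together with a horizon $n$ such that, for every candidate non-secret initial state $\hat x_0 \in \mathbb X_0 \setminus \mathbb X_s$ and every $\hat \nu$, at least one index $i \in [0;n]$ satisfies $\|h(\mathbf x_{x_0,\nu}(i)) - h(\mathbf x_{\hat x_0,\hat \nu}(i))\| > \delta$. By Definition \ref{def:opa}, producing such an $(x_0,\nu,n)$ is exactly what is needed to refute $\delta$-approximate initial-state opacity.

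Next I would split the candidate $\hat x_0$'s into two regimes dictated by the definitions of $\mathcal R_0$ and $\mathcal R_u$. If $\|h(x_0) - h(\hat x_0)\| > \delta$, then $(x_0,\hat x_0) \in \mathcal R_u$ already at $t = 0$ by \eqref{set:unsafe}, so the output-matching condition fails trivially for any $\nu,\hat \nu$. Otherwise $(x_0,\hat x_0) \in \mathcal R_0$ by \eqref{set:initial}, and I would invoke Proposition \ref{BC1} to obtain an input run $\nu$ such that, for every $\hat \nu$, the augmented trajectory reaches a finite time $T \geq 0$ with $(\mathbf x_{x_0,\nu}(T), \mathbf x_{\hat x_0,\hat \nu}(T)) \in \mathcal R_u$. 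Unfolding the definition of $\mathcal R_u$ gives $\|h(\mathbf x_{x_0,\nu}(T)) - h(\mathbf x_{\hat x_0,\hat \nu}(T))\| > \delta$, so choosing any run length $n \geq T$ contradicts the matching requirement in Definition \ref{def:opa}. Existence of the paired non-secret candidate for this analysis is guaranteed by assumption \eqref{initassum}.

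The delicate point I expect to wrestle with is the quantifier dependence. Proposition \ref{BC1} supplies $\nu$ per initial pair $(x_0,\hat x_0) \in \mathcal R_0$, whereas the negation of opacity calls for a single $\nu$ (depending on $x_0$ only) that defeats every admissible $\hat x_0$. The natural way to reconcile this is to read $\nu$ as generated online by the state-feedback law $u^\star(x,\hat x)$ implicit in condition \eqref{3rd}: since \eqref{3rd} is quantified as $\exists u\,\forall \hat u$, the same feedback selects an input that drives $V$ strictly downward no matter how the adversary chooses $\hat \nu$, thereby producing from the fixed secret initial state $x_0$ an input run that uniformly steers the pair into $\mathcal R_u$ whenever $(x_0,\hat x_0) \in \mathcal R_0$. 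Combined with the trivial initial-time divergence in the first regime, this covers every $\hat x_0 \in \mathbb X_0 \setminus \mathbb X_s$ and establishes the lack of $\delta$-approximate initial-state opacity of $\Sigma$.
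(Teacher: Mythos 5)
Your overall route is the same as the paper's: negate Definition \ref{def:opa}, feed the pair $(x_0,\hat x_0)$ into Proposition \ref{BC1} with the sets \eqref{set:initial}--\eqref{set:unsafe}, and read membership in $\mathcal{R}_u$ as an output deviation exceeding $\delta$. You are in fact more careful than the paper in one respect: the explicit case split on those $\hat x_0\in\mathbb X_0\setminus\mathbb X_s$ with $\Vert h(x_0)-h(\hat x_0)\Vert>\delta$ (pairs that lie outside $\mathcal{R}_0$ and so are not covered by Proposition \ref{BC1} at all, but which violate the matching condition already at $i=0$) is genuinely needed, and the paper's proof skips it.

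However, the ``delicate point'' you flag is a real gap, and your proposed fix does not close it. The input implicit in \eqref{3rd} is a feedback $u^\star(x,\hat x)$ of \emph{both} arguments, so the input run $\nu$ it generates --- and therefore the state run $\mathbf{x}_{x_0,\nu}$ itself --- changes with the adversary's choice of $\hat x_0$ and $\hat \nu$. What you obtain is: for every $(\hat x_0,\hat\nu)$ there exists a run of $\Sigma$ from $x_0$ that eventually separates from $\mathbf{x}_{\hat x_0,\hat\nu}$. What the negation of Definition \ref{def:opa} requires is one \emph{fixed} finite run $\{x_0,\dots,x_n\}$ from which every non-secret run separates within the horizon $n$; the quantifier order is reversed. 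Saying ``the same feedback'' is used does not help, because the same feedback law produces different open-loop runs against different adversaries. There is also a secondary issue: the separation time $T$ supplied by Proposition \ref{BC1} may depend on $(\hat x_0,\hat\nu)$ and is not shown to be uniformly bounded, so no single finite $n$ is exhibited. To be fair, the paper's own proof asserts the conclusion without addressing either point; the argument would go through if the $u$ in \eqref{3rd} could be chosen independently of $\hat x$ and the decrease of $V$ were uniform (yielding a uniform $T$), but as written your proof, like the paper's, does not deliver the single witnessing run that the definition demands.
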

\begin{proof}
\textls[-1]{First note that from Definition \ref{def:opa}, system $\Sigma$ is not $\delta$-approximate initial-state opaque if there exists a state run $\mathbf{x}_{x_0,\nu}$ with $x_0 \!\in\! \mathbb X_0 \!\cap\! \mathbb X_s$, such that for any other state runs $\mathbf{x}_{\hat x_0,\hat \nu}$ starting from a non-secret initial condition $\hat x_0 \!\in\! \mathbb X_0 \!\setminus\! \mathbb X_s$, $\max_{i \in [0;n]} \Vert h(x_i)\!-\!h(\hat x_i)\Vert \!>\! \delta$ holds. 
	Now consider a function $V:\mathbb X \!\times\! \mathbb X \!\rightarrow\! \mathbb{R}$ and an input run $\nu$ satisfying \eqref{3rd}. 
	Then, by Proposition \ref{BC1} 
	and from \eqref{set:initial}-\eqref{set:unsafe}, it follows that there must exist a secret state $x_0 \in \mathbb X_0 \cap \mathbb X_s$ and a state run $\mathbf{x}_{x_0,\nu}$ under input run $\nu$, such that for any trajectory $\mathbf{x}_{\hat x_0,\hat \nu}$ originated from any non-secret initial condition $\hat x_0 \!\in\! \mathbb X_0 \!\setminus\! \mathbb X_s$, the trajectories $(\mathbf{x}_{x_0,\nu}(t),\! \mathbf{x}_{\hat x_0,\hat \nu}(t))$ will eventually reach $\mathcal{R}_u$ in finite time, where $\Vert h(\mathbf{x}_{x_0,\nu}(t))\!-\!h(\mathbf{x}_{\hat x_0,\hat \nu}(t))\Vert \!>\! \delta$. 
	Therefore, for the state run $\mathbf{x}_{x_0,\nu}(t)$, there does not exist a state run starting from a non-secret initial state that generates similar output trajectories. Thus, $\delta$-approximate initial-state opacity is violated. }
\end{proof}

In the next section, we discuss how to leverage existing computational methods and software tools to compute $\mathcal{B}(x,\hat x)$ and $V(x,\hat x)$ in Propositions \ref{BC} and \ref{BC1}, respectively. 
\vspace{-0.2cm}
\section{Computation of Barrier Certificates using Sum-of-Squares Technique} \label{SecIv}
In the previous section, we presented sufficient conditions for verifying (resp. the lack of) approximate initial-state opacity of discrete-time control systems by searching for  barrier certificates satisfying inequalities (resp. \eqref{1st}-\eqref{3rd}) \eqref{first}-\eqref{third}.
For systems with polynomial transition functions and semi-algebraic  sets (i.e., described by polynomial equalities and inequalities) $\mathbb{X}_0$, $\mathbb{X}_s$, and $\mathbb{X}$, 
an efficient computational method based on sum-of-squares (SOS) programming can be utilized to search for polynomial barrier certificates.

\begin{assumption}\label{assump1}
	System $\Sigma$ has continuous state set $\mathbb X \subseteq \mathbb{R}^n$ and continuous input set $\mathbb U \subseteq \mathbb{R}^m$. Its transition function $f: \mathbb X\times \mathbb U\rightarrow \mathbb X$ is polynomial in variables $x$ and $u$, and output map $h$ is polynomial in variable $x$.
\end{assumption}	

In the next lemma, we translate the conditions in Proposition \ref{BC} to SOS constraints.
\begin{lemma} \label{SOS}
	\textls[-6]{Suppose Assumption \ref{assump1} holds and sets $\mathcal{R}_0$, $\mathcal{R}_u$, $\mathcal R$, and $\mathbb U$ can be defined as $\mathcal{R}_0\!=\! \{(x,\hat x)\!\in\! \mathbb{R}^n \!\times\! \mathbb{R}^n \mid g_0(x,\hat x)\!\geq\! 0\}$, $\mathcal{R}_u\! =\! \{(x,\hat x)\!\in\! \mathbb{R}^n \!\times\! \mathbb{R}^n \mid g_u(x,\hat x)\!\geq\! 0\}$, $\mathcal R\!= \!\{(x,\hat x) \!\in\! \mathbb{R}^n \!\times \!\mathbb{R}^n \mid g(x,\hat x)\!\geq \!0\}$, $\mathbb U\!= \!\{u \!\in\! \mathbb{R}^m \mid g_c(u)\!\geq \!0\}$, where the inequalities are defined element-wise, and $g_0,g_u,g,g_c$ are vectors of some polynomial functions. Suppose there exists a polynomial function $\mathcal{B}(x,\hat x)$, polynomials $p_{\hat u_i}(x,\hat x,u)$ corresponding to the $i^{th}$ component of $\hat u\!=\![\hat u_1;\ldots;\hat u_m] \!\in\!\mathbb U \!\subseteq\! \mathbb{R}^m$, and vectors of SOS polynomials $\lambda_0,\lambda_u,\lambda,\lambda_c$ of appropriate size such that the following expressions are SOS polynomials:} \vspace{-0.2cm}
	\begin{align} \label{SOSpoly1}
	-&\mathcal{B}(x,\hat x)-\lambda_0^{\top}(x,\hat x)g_0(x,\hat x)+\underline{\epsilon},\\
	&\mathcal{B}(x,\hat x)-\lambda_u^{\top}(x,\hat x)g_u(x,\hat x)-\overline{\epsilon},\\ \notag 
	-&\mathcal{B}(f(x,u),f(\hat x,\hat u)) + \mathcal{B}(x,\hat x) - \lambda^{\top}(x,\hat x)g(x,\hat x) \\   \label{SOSpoly3}    
	&\quad \quad \quad -\sum_{i=1}^{m}(\hat u_i-p_{\hat u_i}(x,\hat x,u))- \lambda^{\top}_c(u)g_c(u),  
	\end{align}\\[-10pt]
\textls[-1]{where  $\underline{\epsilon},\overline{\epsilon} \!\in\! \mathbb{R}_{\geq 0}$ are some constants with $\overline{\epsilon} > \underline{\epsilon}$. Then, $\mathcal{B}(x,\hat x)$ satisfies conditions \eqref{first}-\eqref{third} and
	$\hat u\!=\![\hat u_1;\hat u_2;\ldots;\hat u_m]$, where $\hat u_i\!=\!p_{\hat u_i}(x,\hat x,u)$, $\forall i \!\in\! [1;m]$, is a control policy satisfying $\eqref{third}$. }
\end{lemma}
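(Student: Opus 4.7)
The plan is to verify each of the three conditions \eqref{first}--\eqref{third} of Proposition \ref{BC} separately, exploiting the standard Positivstellensatz-style fact that a polynomial expression of the form $q(z) - \lambda^{\top}(z) g(z)$ being SOS, with $\lambda$ a vector of SOS polynomials, certifies $q(z) \geq 0$ on the semi-algebraic set $\{z \mid g(z) \geq 0\}$.

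For condition \eqref{first}, I would evaluate the SOS polynomial in \eqref{SOSpoly1} at an arbitrary $(x,\hat{x}) \in \mathcal{R}_0$. Since $g_0(x,\hat{x}) \geq 0$ on $\mathcal{R}_0$ by assumption and each entry of $\lambda_0$ is SOS (hence non-negative pointwise), the product $\lambda_0^{\top}(x,\hat{x}) g_0(x,\hat{x}) \geq 0$. Non-negativity of the whole SOS polynomial then yields $-\mathcal{B}(x,\hat{x}) + \underline{\epsilon} \geq \lambda_0^{\top}g_0 \geq 0$, i.e.\ $\mathcal{B}(x,\hat{x}) \leq \underline{\epsilon}$, which is exactly \eqref{first}. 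The argument for \eqref{second} is completely symmetric, using $g_u \geq 0$ on $\mathcal{R}_u$ to conclude $\mathcal{B}(x,\hat{x}) \geq \overline{\epsilon}$.

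The last and trickiest step concerns condition \eqref{third}, which carries the mixed quantifier structure $\forall (x,\hat{x}) \in \mathcal{R}, \forall u \in \mathbb{U}, \exists \hat{u} \in \mathbb{U}$. The SOS expression \eqref{SOSpoly3} is a polynomial in the joint variables $(x,\hat{x},u,\hat{u})$, and the key device is that the term $-\sum_{i=1}^{m}(\hat{u}_i - p_{\hat{u}_i}(x,\hat{x},u))$ is not required to be SOS; it is an unsigned linear multiplier that encodes the equality $\hat{u}_i = p_{\hat{u}_i}(x,\hat{x},u)$. For any $(x,\hat{x}) \in \mathcal{R}$ and any $u \in \mathbb{U}$, I would instantiate $\hat{u} = [p_{\hat{u}_1}(x,\hat{x},u);\ldots;p_{\hat{u}_m}(x,\hat{x},u)]$, so that the linear sum collapses to zero. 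Then $g(x,\hat{x}) \geq 0$ together with $\lambda$ SOS gives $\lambda^{\top}(x,\hat{x}) g(x,\hat{x}) \geq 0$, and $g_c(u) \geq 0$ with $\lambda_c$ SOS gives $\lambda_c^{\top}(u) g_c(u) \geq 0$. Non-negativity of the SOS polynomial therefore implies $-\mathcal{B}(f(x,u),f(\hat{x},\hat{u})) + \mathcal{B}(x,\hat{x}) \geq 0$, which is \eqref{third}, and the map $\hat{u}_i = p_{\hat{u}_i}(x,\hat{x},u)$ is the required extracted control policy realising the existential quantifier.

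The main obstacle is the subtlety of this last step: the SOS constraint encodes an equality via an unsigned multiplier, which is easy to misread as a positivity condition. The crucial insight is that evaluating at $\hat{u}_i = p_{\hat{u}_i}(x,\hat{x},u)$ annihilates that term and simultaneously supplies a constructive witness $\hat{u}$ for the existential quantifier in \eqref{third}. A secondary point, not addressed by the SOS constraints themselves, is that the policy $\hat{u}$ so extracted should lie in $\mathbb{U}$; this is a side condition on the choice of the polynomials $p_{\hat{u}_i}$ that must be checked separately from the SOS certificates.
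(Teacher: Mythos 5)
Your proof is correct and is exactly the standard Positivstellensatz-style argument that the paper invokes when it omits the proof of Lemma \ref{SOS} ("it follows the general methods for converting set constraints conditions to SOS programs"): nonnegativity of each SOS expression on the relevant semi-algebraic set, with the unsigned multiplier $\sum_i(\hat u_i - p_{\hat u_i})$ annihilated by the substitution $\hat u_i = p_{\hat u_i}(x,\hat x,u)$, which also furnishes the witness for the existential quantifier in \eqref{third}. Your closing observation --- that the SOS constraints do not by themselves guarantee $p_{\hat u_i}(x,\hat x,u)\in\mathbb U$ and this must be checked separately --- is a genuine caveat that the paper leaves implicit.
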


We omit the proof of Lemma \ref{SOS}, since it follows the general methods for converting set constraints conditions to SOS programs with \emph{Positivstellensatz} conditions, see \cite{SOSTOOLS} for details.
Similarly, we convert the conditions of Proposition \ref{BC1} to SOS constraints as well.

\begin{lemma} \label{SOS1}
\textls[-6]{Suppose Assumption \ref{assump1} holds and $\mathbb X \subset \mathbb{R}^n$ is a bounded set. 
  Suppose the regions of interest in Proposition \ref{BC1}
	can be defined as $\mathcal{R}_0 \!= \!\{(x,\hat x)\!\in\! \mathbb{R}^n \!\times\! \mathbb{R}^n \! \mid\!  g_0(x,\hat x)\!\geq 0\}$, 
	$\partial \mathcal{R}  \!\setminus \! \partial  \mathcal{R}_u \! =\! \{(x,\hat x)\!\in\! \mathbb{R}^n \!\times\! \mathbb{R}^n \mid g_u(x,\hat x)\!\geq\! 0\}$, $\overline{(\mathcal{R}  \!\setminus \! \mathcal{R}_u)}\!= \!\{(x,\hat x) \!\in\! \mathbb{R}^n \!\times \!\mathbb{R}^n \mid g(x,\hat x)\!\geq \!0\}$,	
	$\mathbb U\!= \!\{\hat u \!\in\! \mathbb{R}^m \!\mid\! g_c(\hat u)\!\geq \!0\}$, where the inequalities are defined element-wise, and $g_0,g_u,g,g_c$ are vectors of some polynomial functions. Suppose there exists a polynomial function $V(x,\hat x)$, polynomials $p_{u_i}(x,\hat x,\hat u)$ corresponding to the $i^{th}$ component of $u\!=\![u_1;\ldots;u_m] \!\in\!\mathbb U \!\subseteq\! \mathbb{R}^m$, 
	and vectors of SOS polynomials $\lambda_0,\lambda_u,\lambda,\lambda_c$ of appropriate size such that the following expressions are SOS polynomials:} \vspace{-0.2cm}
	\begin{align} \label{SOSpoly4}
	-&V(x,\hat x)-\lambda_0^{\top}(x,\hat x)g_0(x,\hat x),\\ \label{ineq2}
	&V(x,\hat x)-\lambda_u^{\top}(x,\hat x)g_u(x,\hat x)-\varepsilon,\\  \notag
	-&V(f(x,u),f(\hat x,\hat u)) + V(x,\hat x) - \lambda^{\top}(x,\hat x)g(x,\hat x)\\ \label{SOSpoly7}
	& \quad \quad -\sum_{i=1}^{m}(u_i-p_{u_i}(x,\hat x,\hat u))- \lambda^{\top}_c(\hat u)g_c(\hat u)-\varepsilon,
	\end{align}\\[-10pt]
	where $\varepsilon$ is a small positive number. Then, $V(x,\hat x)$ satisfies conditions \eqref{1st}-\eqref{3rd} and $u\!=\![u_1; u_2;\ldots;u_m]$, where $u_i\!=\!p_{u_i}(x,\hat x,\hat u)$, $\forall i \in [1;m]$, is a control policy satisfying $\eqref{3rd}$. 
\end{lemma}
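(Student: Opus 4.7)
The plan is to verify each of the three conditions \eqref{1st}--\eqref{3rd} in Proposition \ref{BC1} in turn, exploiting the SOS structure of the expressions \eqref{SOSpoly4}--\eqref{SOSpoly7} in exactly the same Positivstellensatz-style manner that underlies the (omitted) proof of Lemma \ref{SOS}. The basic pattern is always the same: each listed expression is SOS and therefore non-negative pointwise on $\mathbb{R}^n \times \mathbb{R}^n$ (or on $\mathbb{R}^n \times \mathbb{R}^n \times \mathbb{R}^m \times \mathbb{R}^m$ for the last one); on the relevant region, the corresponding $g$-polynomial is non-negative; and the SOS multipliers $\lambda_0, \lambda_u, \lambda, \lambda_c$ take non-negative values. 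Rearranging each SOS inequality then isolates $V(x,\hat x)$ or its one-step increment on the target region.

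First, non-negativity of \eqref{SOSpoly4} gives $-V(x,\hat x) \geq \lambda_0^{\top}(x,\hat x)g_0(x,\hat x) \geq 0$ on $\mathcal{R}_0 = \{g_0 \geq 0\}$, whence $V(x,\hat x) \leq 0$, establishing \eqref{1st}. Symmetrically, \eqref{ineq2} together with $g_u \geq 0$ on $\partial \mathcal{R} \setminus \partial \mathcal{R}_u$ yields $V(x,\hat x) \geq \lambda_u^{\top}(x,\hat x)g_u(x,\hat x) + \varepsilon \geq \varepsilon > 0$, which is \eqref{2nd}.

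The only delicate step is \eqref{3rd}, because of the quantifier alternation ``$\exists u \in \mathbb U,\ \forall \hat u \in \mathbb U$''. I would handle this by declaring the control law $u_i := p_{u_i}(x,\hat x,\hat u)$ for each $i \in [1;m]$ \emph{after} $\hat u$ has been fixed, exactly as prescribed in the statement. Under this choice, every summand $u_i - p_{u_i}(x,\hat x,\hat u)$ inside \eqref{SOSpoly7} vanishes identically, and the residual SOS inequality reduces to
\[
V(f(x,u),f(\hat x,\hat u)) - V(x,\hat x) \leq -\lambda^{\top}(x,\hat x)g(x,\hat x) - \lambda_c^{\top}(\hat u)g_c(\hat u) - \varepsilon.
\]
On $\overline{(\mathcal R \setminus \mathcal R_u)} = \{g \geq 0\}$ and for $\hat u \in \mathbb U = \{g_c \geq 0\}$, the right-hand side is bounded above by $-\varepsilon < 0$, which is strictly stronger than \eqref{3rd}.

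The main obstacle, I expect, is not the inequality manipulations themselves (these are mechanical once the Positivstellensatz encoding is fixed) but being explicit about two points: (i) the order of quantifier elimination above, namely that $u$ is permitted to depend on $\hat u$ through $p_{u_i}$, so the selection happens after $\hat u$ is revealed; and (ii) the tacit assumption that the resulting $u$ lies in $\mathbb U$. The latter is typically imposed either as an additional SOS constraint on the polynomials $p_{u_i}$ or is automatic when $\mathbb U = \mathbb R^m$; I would flag it and, for the remaining routine Positivstellensatz bookkeeping, refer the reader to \cite{SOSTOOLS}, mirroring the treatment of Lemma \ref{SOS}.
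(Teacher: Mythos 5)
Your proposal is correct and is exactly the standard Positivstellensatz argument the paper has in mind: the authors in fact omit this proof entirely (deferring, as for Lemma~\ref{SOS}, to the routine conversion of set constraints into SOS programs), and your three-step rearrangement of \eqref{SOSpoly4}--\eqref{SOSpoly7} is that routine argument carried out. The two caveats you flag --- that $u_i=p_{u_i}(x,\hat x,\hat u)$ depends on $\hat u$, which matches the lemma's stated conclusion but sits uneasily with the $\exists u\,\forall\hat u$ order in \eqref{3rd}, and that nothing forces $p_{u_i}(x,\hat x,\hat u)\in\mathbb U$ --- are genuine subtleties of the paper's own formulation rather than gaps in your proof, and noting them is appropriate.
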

Note that a small tolerance $\varepsilon$ in \eqref{ineq2} and \eqref{SOSpoly7} is needed to ensure positivity of polynomials as required in \eqref{2nd} and \eqref{3rd}.

\begin{remark}
As seen in Lemmas \ref{SOS} and \ref{SOS1}, in order to search for polynomial barrier certificates by means of SOS programming, it is required that regions $\mathcal{R}_0$, $\mathcal{R}_u$, $\partial \mathcal{R} \setminus \partial  \mathcal{R}_u$, $\overline{(\mathcal{R}  \!\setminus \! \mathcal{R}_u)}$ are semi-algebraic sets. 
We highlight that having a system $\Sigma$ with semi-algebraic sets $\mathbb{X}_0$, $\mathbb{X}_s$, and $\mathbb{X}$ is enough to ensure that all these regions are semi-algebraic.
In particular, as a consequence of Tarski-Seidenberg principle \cite{tarski1998decision}, the class of all semi-algebraic sets is closed under finite unions, intersections, taking complement, and Cartesian product. The boundary, the interior, and the closure of a semi-algebraic set are also semi-algebraic. 
Additionally, given the polynomial output map $h$, the set of states satisfying $\Vert h(x)\!-\!h(\hat x)\Vert \!\leq\! \delta$ is equivalent to the one satisfying $(h(x)\!-\!h(\hat x))^{\top}(h(x)\!-\!h(\hat x)) \!\leq\! \delta^2$,
which is again a semi-algebraic set.
See \cite{coste2000introduction} for details. 
\end{remark}

One can leverage existing computational toolboxes such as SOSTOOLS \cite{SOSTOOLS} together with semidefinite programming solvers such as SeDuMi \cite{sturm1999using} to compute polynomial barrier certificates satisfying \eqref{SOSpoly1}-\eqref{SOSpoly3} or \eqref{SOSpoly4}-\eqref{SOSpoly7}.

\begin{remark}
\textls[-3]{By formulating conditions (3)-(5) (resp. (8)-(10)) as a satisfiability problem, one can alternatively search for parametric control barrier certificates using an iterative program synthesis framework, called Counter-Example-Guided Inductive Synthesis (CEGIS), with the help of Satisfiability Modulo Theories (SMT) solvers such as Z3 \cite{de2008z3} and dReal \cite{gao2013dreal}; see, e.g., \cite{jagtap2019formal} for more details. We also refer interested readers to the recent work \cite{peruffo2020automated}, where machine learning techniques were exploited for the construction of barrier certificates.}
\end{remark}

\vspace{-0.2cm}
\section{Examples} \label{Sec:ex}
In this section, we provide two examples to illustrate how 
one can utilize the theoretical results obtained in Sections \ref{SecIII} and \ref{SecIv} for the verification of (the lack of) approximate initial-state opacity. 
\vspace{-0.2cm}
\subsection{Verifying Approximate Initial-State Opacity on a Vehicle Model}

In this example, we consider an autonomous vehicle moving on a single lane road, whose state variable is defined as $x \!=\! [x_1;x_2]$, with $x_1$ being its absolute position (in the road frame) and $x_2$ being its absolute velocity. The discrete-time dynamics of the vehicle is modeled as: \vspace{-0.1cm}
\begin{align} \notag
	\begin{bmatrix}
		x_1(t+1)\\x_2(t+1)
	\end{bmatrix}	
	&= \begin{bmatrix}
		1& \Delta \tau\\
		0& 1
	\end{bmatrix}\begin{bmatrix}
		x_1(t)\\x_2(t)
	\end{bmatrix}+\begin{bmatrix}
		\Delta \tau^2/2\\ \Delta \tau
	\end{bmatrix}u(t),\\   \label{car}  
	y(t) &= \begin{bmatrix}
		1& 0
	\end{bmatrix}\begin{bmatrix}
		x_1(t)\\x_2(t)
	\end{bmatrix},
\end{align} \\[-11pt]
\textls[-6]{where $u$ is the control input (acceleration) and $\Delta \tau$ is the sampling time. The output is assumed to be the position of the vehicle on the road.
Let us first briefly explain the motivation behind this example; see Fig.~\ref{fig1}.}
\begin{figure}[tb!]
		\vspace{0.15cm}
	\centerline{
	\includegraphics[width=.23\textwidth]{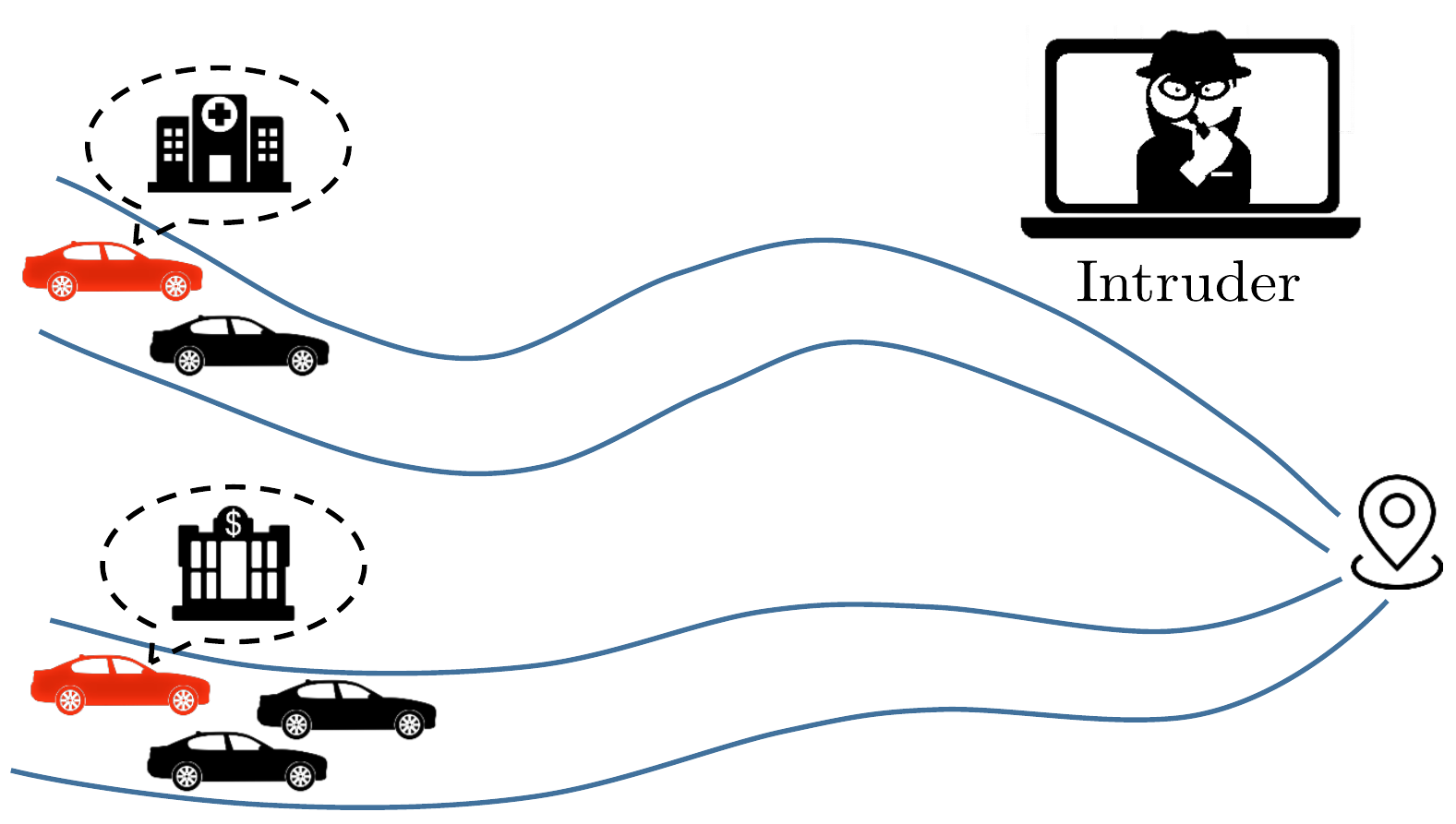}
}
	\caption{Plausible deniability of a vehicle in terms of its initial conditions. The blue lines roughly indicate the intruder's insufficient observation precision.}
	\label{fig1}
		\vspace{-0.65cm}
\end{figure}
\textls[-6]{Suppose the initial locations of the vehicle contain critical information which is needed to be kept secret, e.g., the vehicle might be a cash transit van that aims at transferring money initially from a bank to an ATM machine, or a patient who initially visited a hospital but unwilling to reveal personal information to others. It is implicitly assumed that there is a malicious intruder who is observing the behavior of the vehicle remotely intending to carry out an attack.
Therefore, it is in the interest of the system to verify whether it maintains plausible deniability for secret initial conditions where some confidential assignment is executed. This problem can be formulated as a $\delta$-approximate initial-state opacity problem, where $\delta \geq 0$ captures the security-guarantee level in terms of the measurement precision of the intruder.  
Now consider system \eqref{car} with state space $\mathbb{X} \!=\! [0, 10] \!\times\! [0, 0.1]$, initial set $\mathbb{X}_0\!=\! [0, 10] \!\times\! \{0\}$, secret set $\mathbb{X}_s \!=\! [0, 1] \!\times\! [0, 0.1]$, input set $\mathbb{U} \!=\! [-0.05, 0.05]$ and sampling time $\Delta \tau \!=\! 1$. 
Consider the augmented system $\Sigma \times \Sigma$. Accordingly, the regions of interest in \eqref{set:initial} and \eqref{set:unsafe} are 
$\mathcal{R}_0 \!=\! \{[x_1;x_2] \!\in\! [0,1] \!\times\! \{0\},[\hat x_1;\hat x_2] \!\in\! [1,10] \!\times\! \{0\} \mid (x_1 \!- \!\hat x_1)^2 \!\leq\! \delta^2\}$, and $\mathcal{R}_u \!=\! \{(x, \hat x) \!\in\! \mathbb{X} \!\times\! \mathbb{X} \mid (x_1 \!-\! \hat x_1)^2 \!\geq\! \delta^2 \!+\! \epsilon \}$.
Note that a small positive number $\epsilon$ is needed to certify positivity of the obtained polynomials using SOS programming. 
Now, we set the threshold parameter to be $\delta \!=\! 1$ and search for barrier certificates 
by solving sum-of-squares programs with the help of SOSTOOLS and SeDuMi tools as described in Section \ref{SecIv}. Using Lemma \ref{SOS},  we obtained a polynomial ACBC of degree 2 satisfying \eqref{SOSpoly1}-\eqref{SOSpoly3} with $\underline{\epsilon} = 1$, $\overline{\epsilon} = 1.001$ and a tolerance $\epsilon=0.01$ as follows} \vspace{-0.16cm}
\begin{align} \notag
&\mathcal{B}(x,\hat x)= 0.9227x_1^2+0.2348x_2^2+0.9227\hat x_1^2+0.2348\hat x_2^2\\\notag
&+0.006x_1x_2-0.006\hat x_1x_2-0.006x_1\hat x_2-0.006\hat x_1\hat x_2\\ \vspace{-0.15cm} \notag
&-0.4696x_2\hat x_2-1.845x_1\hat x_1-0.0002\hat x_1+0.0728,
\end{align}  \\[-15pt]
\textls[-6]{and the corresponding control policy is 
$\hat u(x,\hat x,u)= 0.8x_1- 0.8x_2 + 1.5 \hat x_1 -1.5\hat x_2 + u$.
Therefore, we conclude that $\Sigma$ is $1$-approximate initial-state opaque. Particularly, for every trajectory starting from a secret state, there always exists at least one alternative trajectory originated from a non-secret state which are indistinguishable for an intruder with measurement precision $\delta$.}
\begin{figure}[tb!]
		\vspace{0.15cm}
	\centering
	\begin{overpic}[width=.21\textwidth]{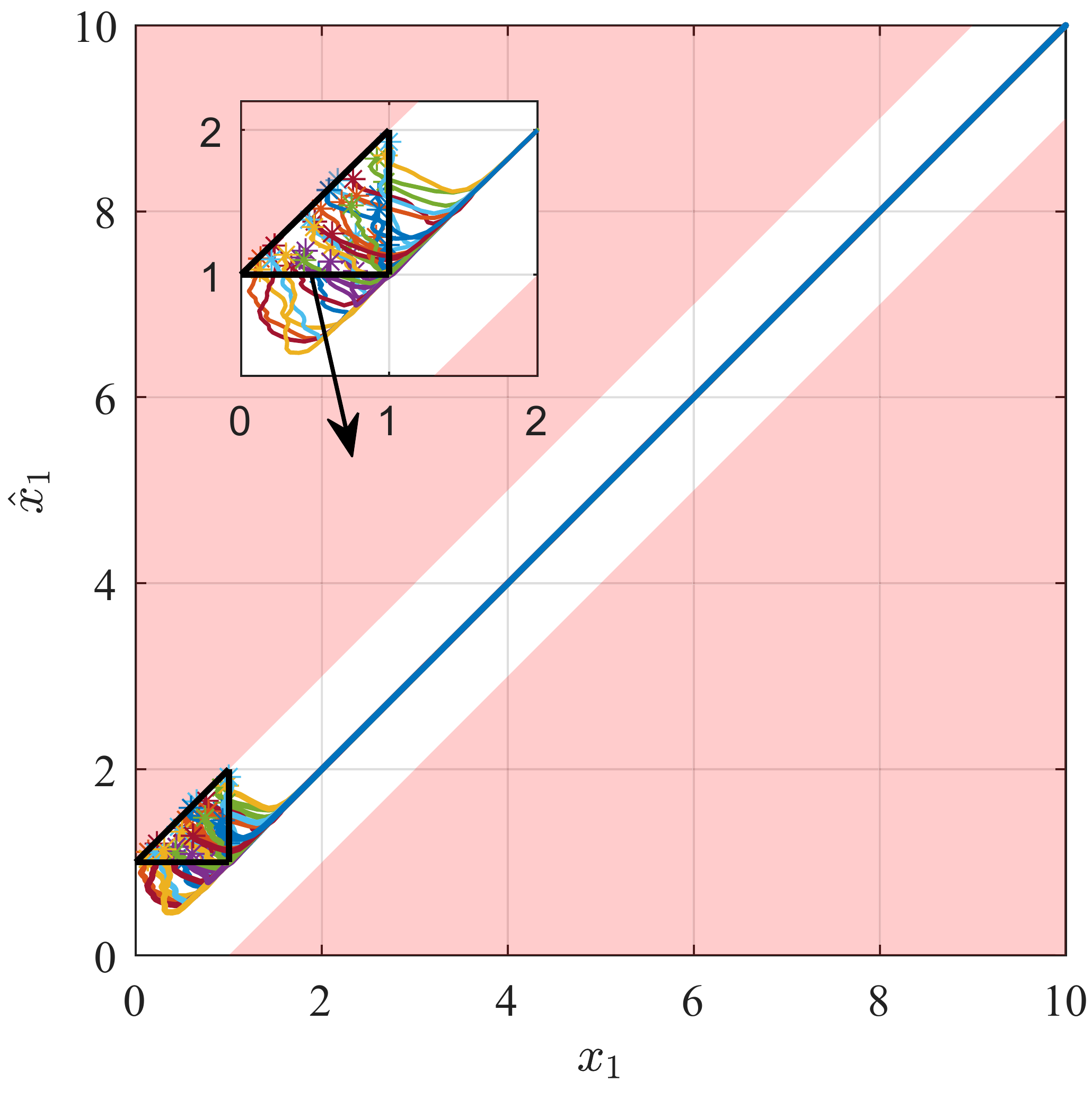}
		\put(66.5, 20.4){$\mathcal{R}_u$}
		\put(66.5, 88.4){$\mathcal{R}_u$}
		\put(29, 50){$\mathcal{R}_0$}
	\end{overpic}
	\caption{Trajectories of $\Sigma \times \Sigma$ projected on the position plane starting from initial region $\mathcal{R}_0$ (represented by the black triangle).
The regions in red are the unsafe set $\mathcal{R}_u$.}
	\label{fig2}
		\vspace{-0.65cm}
\end{figure}
\textls[-6]{Fig.~\ref{fig2} shows the projection of a few state trajectories on the position plane of the augmented system $\Sigma \times \Sigma$, starting from randomly generated initial conditions in $\mathcal{R}_0$ under control policy $\hat u$ with $u$ taking values in $\mathbb{U}$.  
It is seen that any trajectory starting from $\mathcal{R}_0$ does not reach the unsafe region $\mathcal{R}_u$ as time increases. We further notice that  $\delta = 1$ is the smallest threshold for which we are able to find a barrier certificate ensuring approximate initial-state opacity. For a smaller value of $\delta$, approximate initial-state opacity is immediately violated at the initial condition since the assumption in \eqref{initassum} is not valid anymore.}

\vspace{-0.2cm}
\subsection{Verifying Lack of Approximate Initial-State Opacity on a Room Temperature Model}

In this example, we showcase the use of an ACBC in verifying the lack of opacity in a two-room temperature model by Proposition \ref{BC1}. The model is borrowed from \cite{meyer2017compositional}. The evolution of the temperature $\mathbf{T}(\cdot)$ of 2 rooms is described by the discrete-time model: \\[-13pt]
\begin{align}\label{room}
\Sigma:\left\{
\begin{array}{rl}
\mathbf{T}(k)=& A\mathbf{T}(k) + \alpha_h T_h \nu(k) + \alpha_e T_e,\\
\mathbf{y}(k)=&h(\mathbf{T}(k)),
\end{array}
\right.
\end{align}
\textls[-1]{where $A \!\in\! \mathbb{R}^{2\!\times\! 2}$ is a matrix with elements $\{A\}_{ii} \!=\! (1\! - \!2 \alpha\!-\!\alpha_e\!-\!\alpha_h \nu_i)$, $\{A\}_{12} \!=\! \{A\}_{21} \! = \!\alpha$, $\mathbf{T}(k)\!=\![\mathbf{T}_1(k);$ $\mathbf{T}_2(k)]$,   $T_e\!=\![T_{e1};T_{e2}]$, $\nu(k)\!=\![\nu_1(k);\nu_2(k)]$, where $\nu_i(k)\!\in\! [0,1]$, $\forall i\!\in\![1;2]$, represents the ratio of the heater valve being open in room $i$.
The output of the network  is assumed to be the temperature of the second room: $h(\mathbf{T}(k))\!=\! \mathbf{T}_2(k)$.
Parameters $\alpha\! =\! 0.05$, $\alpha_e\!=\!0.008$, and $\alpha_h\!=\! 0.0036$ are heat exchange coefficients, 
$T_e\! =\! -1\,^\circ C$ is the external temperature, and  $T_h \!=\!50\,^\circ C$ is the heater temperature.  
The regions of interest in this example are $\mathbb{X}\!=\![0, 50]^2$, $\mathbb{X}_{0}\!=\![21, 22]^2$, and $\mathbb{X}_{s}\!=\![21.5, 50] \!\times\! [0, 50]$. Specifically, the secret of the network is whether the first room has a temperature initially higher than $21.5\,^\circ C$ (which may indicate activities with people gathering in that room). 
The intruder wants to infer the initial temperature of the first room by monitoring the temperature variation of the last room and using the knowledge of the system model.
Now the objective is to verify if the system is able to keep this secret in the presence of a malicious intruder with measurement precision $\delta \!=\! 1$. 
In this example, a degree bound of 8 is imposed on $\mathcal{B}$ and $V$.  
First, by means of SOSTOOLS, we failed to find a function $\mathcal{B}(x, \hat x)$ satisfying \eqref{SOSpoly1}-\eqref{SOSpoly3} in Lemma \ref{SOS}. Then, we compute a function
$V(x, \hat x)$ as in Lemma \ref{SOS1} to see if the system is lacking the approximate initial-state opacity.  In this case, the regions considered in Lemma \ref{SOS1} are}  \vspace{-0.2cm}
\begin{align} \notag
&\mathcal{R}_0 \!=\! \{\mathbf{T} \!\in\! [21.5,\!22] \!\times\! [21,\!22],\hat{\mathbf{T}}\!\in\! [21,\!21.5] \!\times\! [21,\!22]\}, \\\notag
&\partial \mathcal{R} \! \setminus\!  \partial  \mathcal{R}_u \! =\! \{(\mathbf{T}, \hat{\mathbf{T}})\!\in\!\mathcal{R}\mid 
(\mathbf{T}_1,\hat{\mathbf{T}}_1) \!\in\! \mathcal{R}_1 \cup \mathcal{R}_2 \!\cup \!\mathcal{R}_3 \!\cup\! \mathcal{R}_4 \}, \\ \notag
&\overline{(\mathcal{R} \setminus \mathcal{R}_u)}
\! =\! \{(\mathbf{T}, \hat{\mathbf{T}})\!\in\! \mathcal{R} \mid (\mathbf{T}_1 \!-\!\hat{\mathbf{T}}_1)^2 \leq \delta^2\},
\end{align}\\[-13pt]
\textls[-1]{where $\mathcal{R} \!=\! \mathbb{X}\times\mathbb{X}$, $\mathcal{R}_1  \!= \! [0, \delta] \!\times\!\{0\}$, $\mathcal{R}_2 \!=\! \{0\} \!\times\![0, \delta]$,  $\mathcal{R}_3 \!=\! \{50\} \!\times\![50\!-\!\delta, 50]$, $\mathcal{R}_4 \!=\![50\!-\!\delta, 50] \!\times\! \{50\}$.
With the aid of SOSTOOLS and SeDuMi, we obtained a polynomial barrier certificate of degree 6 satisfying \eqref{SOSpoly4}-\eqref{SOSpoly7} with a tolerance $\varepsilon\!=\!0.01$ and  
control policy $\nu(k)\! =\! [0;0], \forall k \!\in\! \mathbb{N}_{\geq 0}$. 
The system is thus lacking $1$-approximate initial-state opacity.
This means that for each state run starting from a secret initial state in $\Sigma$ under $\nu$, all trajectories from non-secret states will eventually deviate from the former ones in the sense of generating different outputs (captured by $\delta$). 
Once the intruder sees these trajectories, it is certain that the system was initiated from a secret state. 
Fig.~\ref{fig3} shows trajectories of $\Sigma \!\times\! \Sigma$ from $\mathcal{R}_0$ under control sequence $\nu$ with $\hat \nu$ taking values in $\mathbb{U}$. The trajectories eventually reach $\mathcal{R}_u$ in finite time.}   
\begin{figure}[tb!]
		\vspace{0.2cm}
	\centering
	\begin{overpic}[width=.21\textwidth]{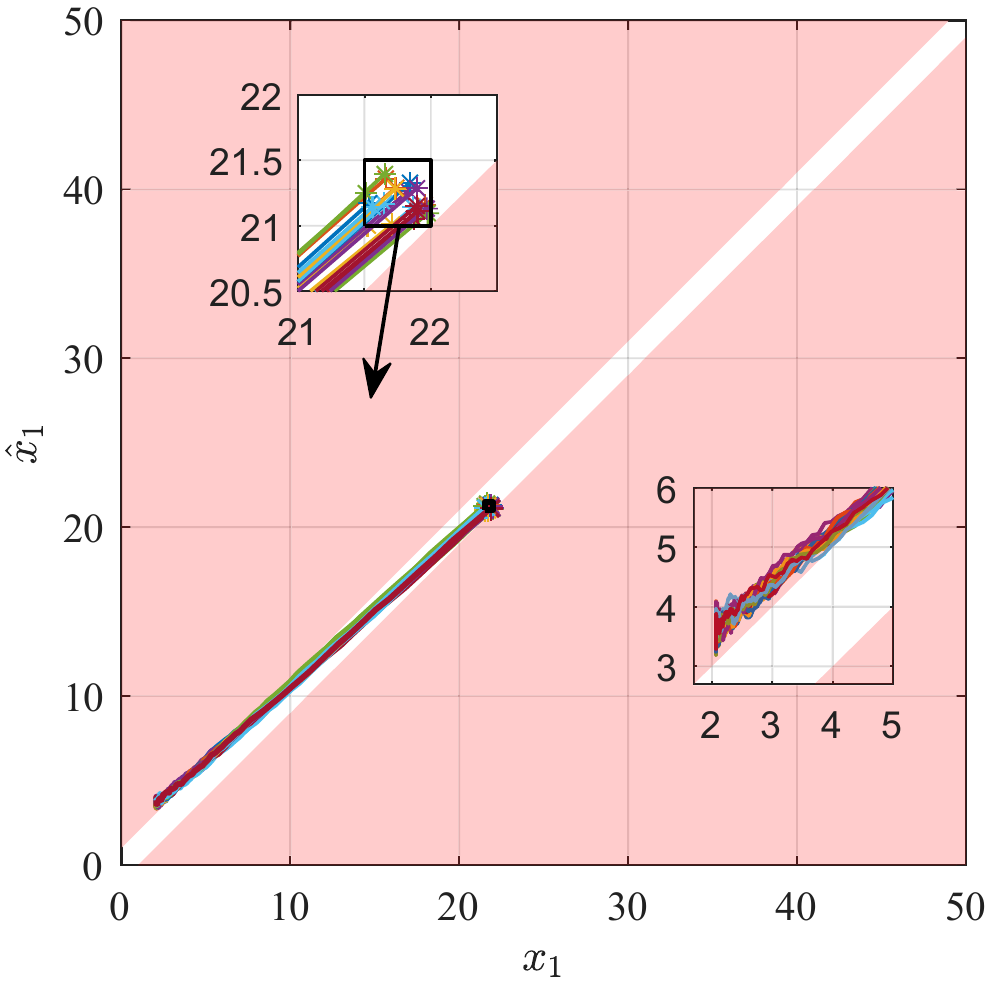}
		\put(30, 20){$\mathcal{R}_u$}
		\put(68, 85){$\mathcal{R}_u$}
		\put(37, 53){$\mathcal{R}_0$}
	\end{overpic}
	\caption{Trajectories of $\Sigma \times \Sigma$ projected on the first-room plane starting from initial region $\mathcal{R}_0$ (represented by the black rectangle). The regions in red are the unsafe set $\mathcal{R}_u$.}
	\label{fig3}
	\vspace{-0.65cm}
\end{figure}
\vspace{-0.15cm}
\section{CONCLUSIONS} \label{Sec:conclusion}
\textls[-5]{We proposed a discretization-free framework for opacity verification of discrete-time control systems.
A pair of augmented control barrier certificates were defined for the analysis of approximate initial-state opacity, which are constructed over an augmented system that is the product of a control system and itself. While both barrier certificates only serve as sufficient conditions, they can be utilized in reverse directions in the sense that one ensures approximate initial-state opacity, and the other one shows the lack of approximate initial-state opacity of the control system. 
We showed that the computation of the barrier certificates can be carried out by some SOS programming. Numerical case studies were conducted to illustrate the effectiveness of the proposed results. Future research will look into the formal synthesis of controllers enforcing approximate opacity properties for control systems using barrier certificates.}

\vspace{-0.25cm}

\addtolength{\textheight}{-12cm}

\bibliographystyle{IEEEtran}
\bibliography{refr}

\end{document}